\def \ni{\noindent}
\newcommand{\be}{\begin{equation}}
\newcommand{\ee}{\end{equation}}
\newcommand{\ben}{\begin{equation*}}
\newcommand{\een}{\end{equation*}}
\newcommand{\bes}{\begin{eqnarray}}
\newcommand{\ees}{\end{eqnarray}}
\newcommand{\besn}{\begin{IEEEeqnarray*}{rCl}}
\newcommand{\eesn}{\end{IEEEeqnarray*}}
\newcommand{\txt}{\textrm}
\newtheorem*{theorem*}{Theorem}
\newtheorem*{definition*}{Definition}
\newtheorem*{lemma*}{Lemma}
\title{Bump Functions With Monotone Fourier Transforms Satisfying Decay Bounds}
\author{T. Tlas}
\date{}
\begin{document}
\maketitle
\thispagestyle{empty}

\begin{abstract}
\ni The existence of a smooth, nonnegative, compactly supported function with monotone (on the half-line) Fourier transform satisfying two-sided decay bounds is demonstrated.
\end{abstract}

\vspace{0.5cm}

The uncertainty principle in harmonic analysis roughly states that a function and its Fourier transform cannot both be made to decay too fast. In particular, it is easy to see that it is impossible for a compactly supported function to have an exponentially decaying Fourier transform, as this would force the function to be analytic. The Beurling-Malliavin multiplier theorem (see e.g. \cite{7th}) guarantees the existence of  smooth, compactly supported functions whose Fourier transforms have sub-exponential decay (i.e. an exponential of a power less than one). However, one may need further information, e.g. a lower bound of the transform or whether it is monotonic. \newline

The goal of this brief note is to prove the following

\begin{theorem*}
For any $\delta \in (0,1)$ and any $C >0$  there is a function $F(x)$ which is $C^\infty$, real, even, nonnegative, supported in $[-1,1]$ and whose Fourier transform $\widehat{F}(k)$ is monotone decreasing for $k \geq 0$ and satisfies the following double inequality 
\ben
  e^{- (1+\epsilon) C k^\delta  } \lesssim  \widehat{F}(k)  \lesssim  \, \, e^{- (1- \epsilon) C k^\delta},
\een

for any $\epsilon > 0$.

\end{theorem*}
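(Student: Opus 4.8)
The plan is to produce $F$ as an infinite convolution of explicit nonnegative, compactly supported \emph{atoms}, so that $\widehat F$ becomes an infinite product; nonnegativity, evenness, realness and the support condition then hold automatically, and the content is to choose the atoms so that every factor of the product is positive and decreasing on $[0,\infty)$ while the product itself has the prescribed decay.

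For the atoms I would take weighted superpositions of triangular functions. Fix $\alpha\in(0,1)$ and, for a scale $A>0$, put
\[
 H_A(x)=\frac{1-\alpha}{(2A)^{1-\alpha}}\int_0^{2A}\frac1b\Bigl(1-\frac{|x|}{b}\Bigr)_+\, b^{-\alpha}\,db ,
\]
a probability density supported in $[-2A,2A]$, whose Fourier transform is $\widehat{H_A}(k)=\Phi_\alpha(Ak)$ with
\[
 \Phi_\alpha(x)=\frac{1-\alpha}{x^{1-\alpha}}\int_0^x\frac{\sin^2u}{u^{2+\alpha}}\,du ,\qquad \Phi_\alpha(0)=1 .
\]
Since $H_A\ge0$, the function $k\mapsto\Phi_\alpha(Ak)$ is automatically positive-definite, strictly positive on $\mathbb R$, and entire of exponential type $2A$. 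The key point is that \emph{$\Phi_\alpha$ is strictly decreasing on $(0,\infty)$}: $-\Phi_\alpha'(x)$ has the sign of $R(x):=(1-\alpha)\int_0^x\frac{\sin^2u}{u^{2+\alpha}}\,du-\frac{\sin^2x}{x^{1+\alpha}}$, and one checks $R(0^+)=0$, $R'(x)=2x^{-2-\alpha}\sin x\,(\sin x-x\cos x)$ with both factors positive on $(0,\pi)$, so $R>0$ on $(0,\pi]$; on $[\pi,\infty)$ the local minima of $R$ occur only at points with $\tan x=x$, where $\sin^2x=x^2/(1+x^2)$ forces $\frac{\sin^2x}{x^{1+\alpha}}<x^{-1-\alpha}\le\pi^{-1-\alpha}<(1-\alpha)\int_0^\pi\frac{\sin^2u}{u^{2+\alpha}}\,du$, so $R>0$ throughout. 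This is the conceptual heart: a single triangle has a $\mathrm{sinc}^2$ transform that vanishes and is far from monotone, whereas smearing over widths yields a transform that is positive and monotone while keeping the atom nonnegative.

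Now set $A_n=c\,(n+n_0)^{-1/\delta}$ and $F=H_{A_1}\ast H_{A_2}\ast\cdots$. Since $\delta<1$ the sum $\sum_nA_n$ converges, and it is $\le\frac12$ once $n_0$ is large, so $F$ is a nonnegative, even, real density supported in $[-1,1]$, with $\widehat F(k)=\prod_n\Phi_\alpha(A_nk)$; this product converges to a positive value for each fixed $k$ (because $1-\Phi_\alpha(x)=O(x^2)$ and $\sum_nA_n^2<\infty$), and since each factor is $[0,1]$-valued and decreasing on $[0,\infty)$, so is $\widehat F$.

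It remains to fix the constant in the decay. Writing $\log\widehat F(k)=\sum_n\log\Phi_\alpha(A_nk)$ and comparing the sum with an integral via $t=c\,n^{-1/\delta}k$ gives
\[
 \log\widehat F(k)=-\,\delta\,c^{\delta}\,\mathcal I(\alpha,\delta)\,k^{\delta}+O(\log k),\qquad \mathcal I(\alpha,\delta):=-\!\int_0^\infty\!\log\Phi_\alpha(t)\,t^{-\delta-1}\,dt>0,
\]
the integral converging because $\log\Phi_\alpha(t)\asymp -t^{2}$ near $0$ (hence $t^{-\delta-1}\log\Phi_\alpha(t)\asymp-t^{1-\delta}$, integrable as $\delta<2$) and $\log\Phi_\alpha(t)=(\alpha-1)\log t+O(1)$ near $\infty$. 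Choosing $c=\bigl(C/(\delta\,\mathcal I(\alpha,\delta))\bigr)^{1/\delta}$ makes $\log\widehat F(k)=-Ck^\delta+O(\log k)$, whence $e^{-(1+\epsilon)Ck^\delta}\lesssim\widehat F(k)\lesssim e^{-(1-\epsilon)Ck^\delta}$ for every $\epsilon>0$ (the $O(\log k)$ is $o(k^\delta)$, and on bounded $k$-ranges the bounds are trivial as $\widehat F$ is continuous and positive). The upper bound also forces $\widehat F$ to decay faster than any polynomial, so its inverse Fourier transform $F$ is $C^\infty$. The main obstacle is exactly this last estimate: isolating the leading term $-Ck^\delta$ with error $o(k^\delta)$ needs the two-sided asymptotics of $\Phi_\alpha$ together with a careful sum-to-integral comparison — the monotonicity lemma, by comparison, is short once the right kernel is in hand.
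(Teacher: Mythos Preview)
Your argument is correct and takes a genuinely different route from the paper.

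The paper works on the Fourier side in two stages. First it fixes a single explicit bump $\phi_{A,B}(x)=e^{-B(1-x)^{-A}}e^{-B(1+x)^{-A}}$ and obtains, via a steepest--descent computation, the precise asymptotic $\widehat{\phi_{A,B}}(k)\sim c\,k^{-\gamma}\cos(\tilde k-\alpha\tilde k^{\delta})\,e^{-\beta\tilde k^{\delta}}$; this is the paper's key lemma. It then manufactures monotonicity by an antiderivative trick: setting $\widehat\psi(k)=-2\pi k\,\widehat\phi(k)^2$ and $\widehat f(k)=\int_k^\infty(-\widehat\psi)$ makes $\widehat f$ decreasing, and a Paley--Wiener argument shows $f$ is $C_0^\infty$. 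Nonnegativity is obtained only at the end by squaring, $F=f^2$, after which monotonicity of $\widehat F=\widehat f\ast\widehat f$ must be re-checked by hand and the two-sided bounds re-derived through a somewhat delicate analysis of the zero set of the cosine factor.

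Your construction is the infinite--convolution alternative: you build $F=\ast_n H_{A_n}$ from nonnegative atoms, so realness, evenness, nonnegativity and the support condition are automatic, and monotonicity reduces to the single pleasant lemma that the smeared--triangle transform $\Phi_\alpha$ is strictly decreasing on $(0,\infty)$. The decay then comes from a clean sum--to--integral comparison of $\sum_n\log\Phi_\alpha(A_nk)$, with error $O(\log k)$, avoiding both the steepest--descent asymptotics and the zero--set bookkeeping that the paper needs for its lower bound. The trade--off is that the paper's $F$ is essentially closed--form while yours is a limit; on the other hand your route uses only real--variable calculus and gives the sharp leading constant $C$ directly from a single convergent integral $\mathcal I(\alpha,\delta)$. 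Two small points worth tightening in a write--up: justify the numerical inequality $(1-\alpha)\int_0^\pi\frac{\sin^2u}{u^{2+\alpha}}\,du>\pi^{-1-\alpha}$ (e.g.\ via $\sin u\ge 2u/\pi$ on $(0,\pi)$, which gives the left side $\ge 4\pi^{-1-\alpha}$), and state explicitly that the sum--integral error is $O(\log k)$ because $n\mapsto\log\Phi_\alpha(A_nk)$ is monotone with extreme term of size $-(1-\alpha)\log k+O(1)$.
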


Above we've used the notation $f_1(k) \lesssim f_2(k)$ to mean $f_1(k) \leq c \, f_2(k)$ for some constant $c$. Our convention for the Fourier transform is $\widehat{f}(k) = \int e^{-2 \pi i kx} f(x) dx$. To reduce clutter in some formulas below, we will set $\tilde{k} = 2 \pi k$.   \newline

We first prove a lemma of interest\footnote{See \cite{bump} for a related heuristic discussion.} in its own right. 

\begin{lemma*}
For $A,B \in (0, \infty)$, let the smooth function $\phi_{A,B}: \mathbb{R} \to \mathbb{R}$ be given by

\ben
\phi_{A,B}(x) =
\begin{cases}
e^{-\frac{B}{(1- x)^A}} e^{-\frac{B}{(1+x)^A}} & |x| < 1 \\
0 & |x| \geq 1
\end{cases}
\een

then, as $k \to \infty$, its Fourier transform $\widehat{\phi_{A,B}} (k)$ is asymptotic to a constant multiple of

\ben
\frac{  \cos( \tilde{k} - \alpha \tilde{k}^{\frac{A}{A+1}})   e^{  - \beta \tilde{k} ^{  \frac{A}{ A+1  }  }}      }      {    \tilde{k}^{\frac{A+2}{2A+2} }    }
\een

for constants $\alpha, \beta$. Moreover, for a fixed $A$, $B$ can be chosen to give any prescribed positive value of $\beta$.

\end{lemma*}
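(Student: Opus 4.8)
The plan is to use the fact that $\phi_{A,B}$ is real-analytic on $(-1,1)$ and vanishes to infinite order at $\pm 1$, so that the large-$k$ behaviour of $\widehat{\phi_{A,B}}(k)=\int_{-1}^{1}e^{-i\tilde{k}x}\phi_{A,B}(x)\,dx$ is governed entirely by the two essential singularities at $x=\pm 1$, which I would analyze by the method of steepest descent. Near $x=1$ only the factor $e^{-B/(1-x)^A}$ is singular, $e^{-B/(1+x)^A}$ being holomorphic and nonvanishing there; after the substitution $u=1-x$ the local piece becomes $e^{-i\tilde{k}}\int e^{\Psi(u)}g(u)\,du$ with $\Psi(u)=i\tilde{k}u-Bu^{-A}$ and $g$ holomorphic near $0$, $g(0)=e^{-B/2^A}\neq 0$. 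The equation $\Psi'(u)=i\tilde{k}+ABu^{-A-1}=0$ has the relevant root $u_*=(AB/\tilde{k})^{1/(A+1)}e^{i\theta_0}$, $\theta_0=\pi/(2(A+1))$, which slides to the origin as $\tilde{k}\to\infty$ while staying inside the sector $|\arg u|<\pi/(2A)$ in which $e^{-Bu^{-A}}\to 0$.

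First I would make the contour move rigorous. Since $\phi_{A,B}$ is holomorphic on $\mathbb{C}\setminus\big((-\infty,-1]\cup[1,\infty)\big)$ and extends continuously to $[-1,1]$ with value $0$ at $\pm 1$, Cauchy's theorem lets me replace the segment $(-1,1)$ by a path $\Gamma$ in the closed lower half-plane that leaves $x=1$ along the ray $x=1-\rho e^{i\theta_0}$ (this ray is precisely the steepest-descent curve: the identity $\cos(A\theta_0)=\sin\theta_0$ makes $\Psi$ equal to $e^{i(\theta_0+\pi/2)}\big(\tilde{k}\rho+B\rho^{-A}\big)$ along it), runs out to a fixed small $\rho=c_0$, then drops to $\mathrm{Im}\,x=-h$ for a fixed $h>0$, crosses over near $x=-1$, and re-enters $x=-1$ along the conjugate ray. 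Along the middle stretch $|e^{-i\tilde{k}x}|\le e^{-h\tilde{k}}$ and $\phi_{A,B}$ is bounded, so that piece is $O(e^{-h\tilde{k}})$, negligible against the claimed order since $A/(A+1)<1$; the two short drops contribute $O(e^{-c\tilde{k}})$ for the same reason.

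The core is the integral along the ray near $x=1$. With $u=\rho e^{i\theta_0}$ one has $\Psi=e^{i\theta}\big(\tilde{k}\rho+B\rho^{-A}\big)$ where $\theta=\theta_0+\pi/2=\pi(A+2)/(2(A+1))$, and $\tilde{k}\rho+B\rho^{-A}$ is a positive real function of $\rho$ with nondegenerate minimum $\tfrac{A+1}{A}\mu$ at $\rho=|u_*|$, where $\mu:=(AB)^{1/(A+1)}\tilde{k}^{A/(A+1)}$. Rescaling $\rho=|u_*|t$ turns the exponent into $\mu e^{i\theta}$ times a $\tilde{k}$-independent real function of $t$ with a clean quadratic minimum at $t=1$, so the standard one-variable Laplace expansion gives $\int e^{\Psi}g\,du\sim(\textrm{const})\,|u_*|\,\mu^{-1/2}e^{\Psi(u_*)}$. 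Since $|u_*|\,\mu^{-1/2}\asymp\tilde{k}^{-(A+2)/(2A+2)}$ and $\Psi(u_*)=\tfrac{A+1}{A}\mu\,e^{i\theta}=\gamma\tilde{k}^{A/(A+1)}(\cos\theta+i\sin\theta)$ with $\gamma:=\tfrac{A+1}{A}(AB)^{1/(A+1)}$ and $\theta\in(\pi/2,\pi)$, the $x=1$ singularity contributes a constant multiple of $\tilde{k}^{-(A+2)/(2A+2)}e^{-\beta\tilde{k}^{A/(A+1)}}e^{-i\tilde{k}}e^{i\alpha\tilde{k}^{A/(A+1)}}$, with $\beta:=\gamma|\cos\theta|>0$ and $\alpha:=\gamma\sin\theta>0$. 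Because $\phi_{A,B}$ is real and even the $x=-1$ singularity contributes the complex conjugate of this, and adding the two gives $\widehat{\phi_{A,B}}(k)\sim(\textrm{const})\,\tilde{k}^{-(A+2)/(2A+2)}e^{-\beta\tilde{k}^{A/(A+1)}}\cos\big(\tilde{k}-\alpha\tilde{k}^{A/(A+1)}+\textrm{const}\big)$ — the asserted form, where the additive constant inside the cosine (the argument of the Laplace prefactor) is immaterial for the later application. For the last clause, for fixed $A$ the map $B\mapsto\beta=\tfrac{A+1}{A}\big|\cos\tfrac{\pi(A+2)}{2(A+1)}\big|\,(AB)^{1/(A+1)}$ is a continuous strictly increasing bijection of $(0,\infty)$ onto itself, so every prescribed positive $\beta$ is attained.

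The main obstacle is the second step: producing a single contour $\Gamma$ that at once threads the thin admissible sector at each essential singularity, passes through the saddle along the steepest-descent direction, and has all of its off-saddle portions provably of smaller order, and then making the Laplace estimate uniform as $u_*$ drifts into the singularity — the latter being exactly what the rescaling $\rho=|u_*|t$ achieves, since it decouples the large parameter $\mu$ from the (fixed) shape of the integrand.
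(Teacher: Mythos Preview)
Your argument is correct and is essentially the same steepest-descent computation the paper gives: deform $\int_{-1}^1 e^{-i\tilde{k}x}\phi_{A,B}(x)\,dx$ into the lower half-plane, reduce by symmetry to the contribution of the essential singularity at $x=1$, locate the saddle of $\Psi(u)=i\tilde{k}u-Bu^{-A}$ at $u_*=(AB/\tilde{k})^{1/(A+1)}e^{i\pi/(2(A+1))}$, and rescale to make the large parameter $\mu=(AB)^{1/(A+1)}\tilde{k}^{A/(A+1)}$ explicit before applying Laplace. The only cosmetic differences are that the paper uses a single V-shaped contour (equivalent, after the change of variables $z\mapsto\tilde{k}^{1/(A+1)}(1-z)$, to your ray through the saddle) rather than your ray--drop--crossing construction, and performs the rescaling up front rather than inside the local analysis; your values of $\alpha,\beta$ agree with the paper's since $\bigl|\cos\tfrac{\pi(A+2)}{2(A+1)}\bigr|=\sin\tfrac{\pi}{2(A+1)}$.
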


\begin{proof}

\begin{figure}
\begin{pspicture}(13,6)
\psline{->}(0,3)(6,3)
\psline{->}(3,0)(3,6)
\pscircle(0.5,3){0.1}
\pscircle(5.5,3){0.1}
\qdisk(3, 2){3pt}
\psline[linewidth=2pt]{->}(0.6,2.9)(3,2)(5.4,2.9)
\rput(0.5, 3.5){-1}
\rput(5.5, 3.5){1}
\rput(1,1 ){$-i \tan\Big (\frac{\pi}{2(A+1)} \Big )$}
\rput( 4, 2.1 ) {$\gamma$}
\psline[linestyle=dotted, linearc=2]{->}(0.9,1.4)(2,2)(2.8, 2)

\psline{->}(7,1)(12,1)
\psline{->}(7.5, 0)(7.5, 6)
\pscircle(7.5,1){0.1}
\qdisk( 11 , 2.4 ) {3pt}
\psline[linewidth=2pt]{->}(7.6,1.05)(10.9, 2.35)
\rput(9,2){$\gamma_{\tilde{k}}$}
\rput( 10,5 ) {$\tilde{k}^\frac{1}{A+1} \bigg (1 + i \tan \Big (\frac{\pi}{2 (A+1)}\Big ) \bigg ) $   }
\psline[linestyle=dotted, linearc=2]{->}(9,4.5)(10,4)(10.5, 3)( 10.9 , 2.5)
\end{pspicture}

\caption*{The contours $\gamma$ and $\gamma_{\tilde{k}}$.}
\end{figure}
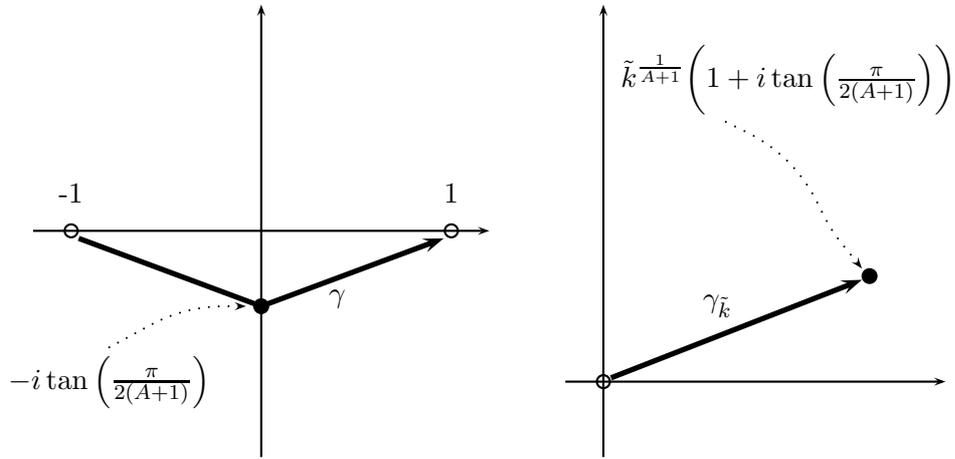

We need to estimate 

\ben
\int_{-1}^1 e^{- i \tilde{k} x} \phi_{A,B}(x) dx .
\een

Choosing the principal branch of the logarithm (to define the exponents in $\phi_{A,B}$), we see that the integrand is an analytic function in the region $|\Re(z) | < 1, \Im(z) \leq 0$. We can thus replace integration along the real axis with the integral along the contour $\gamma$ shown in the figure above. The reason for this choice will be apparent momentarily. Note that no trouble arises at the endpoints of the contour as the integrand remains bounded (in fact it tends to zero) as we approach them. \newline

It is easy to see that the integral along the left segment is the conjugate of the one on the right. Doing now the change of variables $z \to \tilde{k}^\frac{1}{A+1} (1-z) $ we see that the integral above is equal to

\ben
2 \Re \bigg (  -\frac{e^{-i\tilde{k}}}{\tilde{k}^\frac{1}{A+1}      }     \int_{    \gamma_{\tilde{k} }  } e^{ \tilde{k}^\frac{A}{A+1} g(z)    } h_{\tilde{k}}(z) dz    \bigg )
\een

where 

\besn
g(z) & = & iz - \frac{B}{z^A} \\
h_{\tilde{k}}(z) &  =   &   e^{ - B \big (  2 - \tilde{k}^\frac{-A}{A+1}  z \big )^{-A}     }
\eesn

and $\gamma_{\tilde{k}}$ is the contour shown in the figure. \newline

It is straightforward to verify that, provided $\tilde{k}$ is large enough, then $g$ has a single critical point on the contour located at $z_0 = (AB)^\frac{1}{A+1} e^\frac{ i \pi}{2(A+1)}$. Moreover, $\Re(g)$ has a maximum there. We can now deform the contour in a neighbourhood around $z_0$ so that it follows the path of steepest descent there and apply the steepest descent method\footnote{See e.g. \cite{marsden}. Note that one needs to extend the result there as $h_{\tilde{k}}$ and $\gamma_{\tilde{k}}$ both depend on $\tilde{k}$, but such an extension is trivial in this case. } to obtain that, up to a constant

\ben
 \int_{    \gamma_{\tilde{k} }  }   e^{ \tilde{k}^\frac{A}{A+1} f(z)    } g_{\tilde{k}}(z) dz \sim \frac{  \exp \bigg [ \tilde{k}^\frac{A}{A+1} \big ( i  e^\frac{i \pi}{2(A+1)} (AB)^\frac{1}{A+1} (1 + \frac{1}{A})                                \big )     \bigg ] }{\tilde{k}^\frac{A}{2(A+1)}}
\een

Putting everything together, we have what we want with

\besn
\alpha  & =  &  (AB)^\frac{1}{A+1} \Big ( 1+ \frac{1}{A} \Big ) \cos   \Big(\frac{\pi}{2A+2}\Big)      \\
\beta & = &  (AB)^\frac{1}{A+1} \Big (1 + \frac{1}{A}\Big ) \sin \Big(\frac{\pi}{2A+2}\Big).
\eesn

Clearly, for a fixed $A$, $\beta$ takes all values in $(0, \infty)$ by varying $B$.
\end{proof}

We are now ready to prove the theorem.

\begin{proof}[Proof of theorem]
Let $\phi(x)$ be a real, even, nonnegative, $C_0^\infty$ function whose support is contained in $[-\frac{1}{2}, \frac{1}{2}]$. Thus, $\widehat{\phi}(k)$ is real and even. It follows that $\phi \ast \phi(x)$ is a real, even, nonnegative, $C_0^\infty$ function whose support is contained in $[-1, 1]$, and moreover, $\widehat{\phi \ast \phi} = \big (\widehat{\phi}(k)\big)^2 \geq 0$. Let $\psi(x) = i (\phi \ast \phi)' (x)$. We then have that $\widehat{\psi}(k) = - (2 \pi) k \big (\widehat{\phi}(k)\big)^2$. Therefore, $\hat{\psi}(k)$ is a real, odd function, which is nonpositive for $k \geq 0$. Moreover, by Paley-Wiener's theorem (see e.g. \cite{reed}) we have that $\widehat{\psi}(k)$ has an extension to an entire function such that, for every natural $N$, there is a constant $C_N$ such that 

\ben
|\widehat{\psi}(k)| \leq C_N \frac{e^{ 2 \pi | \Im(k) |}}{(1 + |k|)^N}.
\een

Now, set 

\ben
I = - \int_0^\infty \widehat{\psi}(t) dt,
\een

and define the function\footnote{At the moment the hat is purely notational. It will be justified below.}
 $\widehat{f}(k)$ by
\ben
\widehat{f}(k) = I + \int_\gamma \widehat{\psi}(z) dz,
\een

where $\gamma$ is some curve in the complex plane going from 0 to $k$. We see at once that $\widehat{f}(k)$ is an entire function whose derivative is $\widehat{\psi}(k)$ and that, if $k$ is real, then $\widehat{f}(k)$ is a real, even and nonnegative Schwartz function. \newline

Now choose $\gamma$ to be a straight line segment $\gamma_1$ along the real axis from 0 to $\Re (k)$, followed by a vertical one $\gamma_2$ to $k$. We then have

\ben
\Big | \int_{\gamma_1} \widehat{\psi}(z) dz  \Big | \leq  \int_0^{|\Re(k)|} | \widehat{\psi}(t)| dt
 \leq  C_2 \int_{-\infty}^\infty \frac{1}{(1 + |t|)^2} dt 
 <  \pi C_2.
\een

Also,

\besn
\Big | \int_{\gamma_2} \hat{\psi}(z) dz  \Big | & \leq & \int_0^{|\Im (k)|}   | \widehat{\psi}(\Re(k) + i t)| dt \\
& \leq & C_2 \int_0^{|\Im (k)|} \frac{e^{2 \pi t}}{(1 + | \Re(k) + it|)^2} \leq C_2 e^{2 \pi |\Im(k)|} |k| .
\eesn

Putting it all together, we have that $\widehat{f}(k)$ is an entire function which satisfies

\ben
| \widehat{f}(k) | \leq  \pi C_2 ( 1  + |k|) e^{2 \pi |\Im (k)|}.
\een

It then follows, again by Paley-Wiener, that there is a Schwartz distribution $f(x)$, supported in $[-1, 1]$ whose Fourier transform has $\widehat{f}(k)$ as its entire extension (hence the notation). However, we showed above that $\widehat{f}(k)$ is a Schwartz function when restricted to the real line, which implies that $f(x)$ is in fact a Schwartz function. Thus, $f(x)$ is a real, even, $C_0^\infty$ function supported in $[-1, 1]$ whose Fourier transform's derivative is $\widehat{\psi}(k)$, and is thus negative for positive $k$'s. Let us now prove the decay estimates.\newline

Suppose $C >0$ and $\delta \in (0,1)$ are given. We shall take $\phi(x) = \phi_{A,B}(2x)$, choosing $A$ and $B$ such that up to an overall constant

\ben
\widehat{\phi_{A,B}}\bigg(\frac{k}{2}\bigg) \sim \frac{  \cos(\frac{\tilde{k}}{2} - \frac{\alpha}{2^\delta} \tilde{k}^\delta) e^{- \frac{C k^\delta}{2}}   }{    k^\frac{A+2}{2A + 2}      } .
\een

It follows at once that for $k \geq 0$

\besn
\widehat{f}(k) \lesssim \int_k^\infty  e^{- C t^\delta}     dt \lesssim e^{ (1-\epsilon) C k^\delta}
\eesn

for any $\epsilon >0$, and we have the upper bound we need.\newline

To obtain the lower bound, let $Z$ be the set of zeroes of the function $\cos^2\big(\frac{\tilde{k}}{2} - \frac{\alpha}{2^\delta} \tilde{k}^\delta \big)$, and $\tilde{Z}$ the preimage by this function of $[0, \frac{1}{4}]$. Given $k$, let $\{k_n \}_{n=1}^\infty$ be the sequence of elements of $Z \cap (k, \infty)$ arranged in increasing order, and let $k_0$ be the largest element in $Z$ less or equal to $k$. Since $\delta < 1$, it is clear, by considering the derivative of the argument of the cosine, that $k_{n+1} - k_n \to 2 \pi$ as $n \to \infty$. It also follows that $\tilde{Z}$ consists of a collection of disjoint intervals, each containing a single $k_n$, with their lengths tending to $\frac{2 \pi}{3}$.     \newline

We now have that, for all sufficiently large $k$, and for any $\epsilon > 0$

\besn
\widehat{f}(k) & \gtrsim & \int_k^\infty \frac{  \cos^2(\frac{t}{2} -\frac{ \alpha}{2^\delta} t^\delta) e^{- C t^\delta}    }{   t^\frac{1}{A+1}     }   dt \\
& \gtrsim & \int_{\tilde{Z}^c \cap [k, \infty)}  \frac{e^{- C t^\delta}}{t^\frac{1}{A+1}} dt \\
& \gtrsim & \int_{\tilde{Z}^c \cap [k, \infty)} e^{- (1+ \frac{\epsilon}{2}) C t^\delta} dt .
\eesn

Define $\lambda \in [0,1]$ via $k = (1- \lambda) k_0 + \lambda k_1$ and let $I_n = [ (1-\lambda) k_{n-1} + \lambda k_n, (1-\lambda) k_n + \lambda k_{n+1}]$. We then see that

\besn
\int_{\tilde{Z}^c \cap [k, \infty)} e^{- (1+ \frac{\epsilon}{2}) C t^\delta} dt & = & \sum_{n=1}^\infty \int_{ \tilde{Z}^c \cap I_n   }  e^{- (1+ \frac{\epsilon}{2}) C t^\delta} dt  \\
& \geq &  \sum_{n=1}^\infty \int_{\big [ \frac{k_{n-1} + k_n   }{2}, (1-\lambda) k_n + \lambda k_{n+1} \big  ] } e^{- (1+ \frac{\epsilon}{2}) C t^\delta} dt\\
& \geq  & \sum_{n=1}^\infty \int_{I_n} e^{-(1 + \epsilon)C t^\delta} dt \\
& = & \int_k^\infty  e^{-(1 + \epsilon)C' t^\delta} dt \gtrsim e^{-(1 + \epsilon)C t^\delta},
\eesn 

where to go to the penultimate line we've used the fact that 

\ben
\frac{     \int_x^{x+L(x)} e^{ - (1+  \epsilon) C t^\delta    }    dt  }{         \int_{x + \frac{L(x)}{2} }^{x + L(x)} e^{- (1 + \frac{\epsilon}{2}    )C t^\delta  }   dt        } \to 0 \quad \txt{as} \quad x \to \infty
\een

as long as $L(x)$ is bounded away from 0 and $\infty$, which is the case above since $L(x)$ will be contained in a narrow range around $2 \pi$. \newline

We have thus obtained a function $f$ satisfying all the requirements of the theorem with the possible exception of nonnegativity. \newline

Let $F(x) = f^2(x)$, then $F$ is $C^\infty$, real, even, nonnegative and is supported in $[-1,1]$. What remains is showing the monotonicity of its Fourier transform and the decay bounds.\newline

Let $k \geq 0$. We then have that

\ben
\frac{d \widehat{F}(k)}{dk} 
  =  \int_0^\infty \Big ( \widehat{f}(\kappa - k) -  \widehat{f}(\kappa + k) \Big )  \frac{d\widehat{f}}{d \kappa}(\kappa) d \kappa .
\een

Since $\widehat{f}$ is monotone and even, the term in brackets is positive  ($|\kappa - k| \leq \kappa + k$) while $ \frac{d\widehat{f}}{d \kappa}(\kappa)$ is negative. Therefore $\widehat{F}$ is monotone decreasing on the positive axis. \newline

Now, note that

\besn
\widehat{F}(k) & = & \int \widehat{f}(k - \kappa) \widehat{f}(\kappa) d\kappa  \\
& \lesssim & \int e^{ - (1- \epsilon) C | k - \kappa|^\delta   } e^{ - (1- \epsilon) C |\kappa|^\delta  } d \kappa  \\
& = & k \int e^{   - (1-\epsilon) C k^\delta \big ( | 1- \chi|^\delta + | \chi|^\delta \big ) } d \chi .
\eesn

It is easy to show, essentially by an application of Laplace's method\footnote{A little care is needed since the maxima of the exponent which dominate the asymptotics are located at 0 and 1 where the exponent is not regular. However, the extension of the method needed is completely straightforward in this case.}, that the last expression is asymptotic to

\ben
\frac{   2 e^{- (1- \epsilon) C k^\delta} \bigg ( \int_{-\infty}^\infty e^{-|\chi|^\delta} d \chi  \bigg ) }{  \big ( (1- \epsilon) C \big )^{\frac{1}{\delta}}  },
\een

which in turn gives that

\ben
\widehat{F}(k) \lesssim e^{(1 - \epsilon) C k^\delta},
\een

and we have the decay estimate from the right. The one from the left is demonstrated in a similar way, which concludes the proof.
 \end{proof}

\textbf{Acknowledgments:} The author would like to thank J. Merhej and B. Shayya for reading a preliminary version of this paper and for numerous comments which greatly improved its readability. Additionally, the author would like to thank J. Steif for posing the question whether the function constructed can be taken to be positive.

\texttt{{\footnotesize Department of Mathematics, American University of Beirut, Beirut, Lebanon.}
}\\ \texttt{\footnotesize{Email address}} : \textbf{\footnotesize{tamer.tlas@aub.edu.lb}}

\end{document}